\def\chitilde{\tilde{\chi}}
\def\Normal{{\mathcal N}}
\def\N{{\mathbb N}}
\def\Z{{\mathbb Z}}
\def\R{{\mathbb R}}
\def\C{{\mathbb C}}
\def\E{{\mathbb E}}
\def\betah{{\hat\beta}}
\def\F{{\mathcal F}}
\DeclareMathOperator{\Tr}{Tr}
\newtheorem{thm}{Theorem}
\newtheorem{lem}[thm]{Lemma}
\newtheorem{prop}[thm]{Proposition}
\theoremstyle{definition}
\theoremstyle{remark}
\newtheorem{remark}[thm]{Remark}
\long\def\symbolfootnote[#1]#2{\begingroup%
\def\thefootnote{\fnsymbol{footnote}}\footnote[#1]{#2}\endgroup}
\title{The mean spectral measures of random Jacobi matrices related to Gaussian beta ensembles}
\author{Trinh Khanh Duy and Tomoyuki Shirai}
\begin{document}
\maketitle
\begin{abstract}
	An explicit formula for the mean spectral measure of a random Jacobi matrix is derived. The matrix may be regarded as the limit of Gaussian beta ensemble (G$\beta$E) matrices as the matrix size $N$ tends to infinity with the constraint that $N \beta $ is a constant.  
\end{abstract}

{\bf Keywords.} random Jacobi matrix, Gaussian beta ensemble, spectral measure, self-convolutive recurrence

{{2010 Mathematics Subject Classification. } Primary 47B80; secondary 15A52, 44A60, 47B36}


\section{Introduction}

The paper studies spectral measures of random (symmetric) Jacobi matrices of the form
\[
	J_{\alpha} = \begin{pmatrix}
		\Normal(0,1)		&\chitilde_{2\alpha}	\\
		\chitilde_{2\alpha}	&\Normal(0,1)		&\chitilde_{2\alpha}	\\
							&\ddots		&\ddots		&\ddots
	\end{pmatrix},\quad (\alpha > 0),
\]
where the diagonal is an i.i.d.~(independent identically distributed) sequence of standard Gaussian $\Normal(0,1)$ random variables, the off diagonal is also an i.i.d.~sequence of $\chitilde_{2\alpha}$-distributed random variables. Here $\chitilde_{2\alpha}=\chi_{2\alpha}/\sqrt{2}$ with $\chi_{2\alpha}$ denoting the chi distribution with $2\alpha$ degree of freedom. As explained later, $J_\alpha$ is regarded as the limit of Gaussian beta ensembles (G$\beta$E for short) as the matrix size $N$ tends to infinity and the parameter $\beta$ also varies with the constraint that $N\beta = 2\alpha$.

Let us explain some terminologies and introduce main results of the paper. A (semi-infinite) Jacobi matrix is a symmetric tridiagonal matrix of the form
\[
				J = \begin{pmatrix}
					a_1		&b_1		\\
					b_1		&a_2		&b_2\\
							&\ddots		&\ddots		&\ddots	\\
				\end{pmatrix}, \text{ where } a_i \in \R, b_i > 0.
\]
For a Jacobi matrix $J$, there is a probability measure $\mu$ on $\R$ such that 
\[
	\int_\R x^k d\mu  = \langle J^k e_1, e_1\rangle =J^k(1,1), \quad k = 0,1,\dots,
\]
where $e_1 = (1,0,\dots)^T \in \ell^2$. Here $\langle u, v \rangle$ denotes the inner product of $u$ and $v$ in $\ell^2$, while $\langle \mu, f \rangle := \int f d\mu$ will be used to denote the integral of a function $f$ with respect to a measure $\mu$. Then the measure $\mu$ is unique if and only if $J$, as a symmetric operator defined on $D_0 = \{x=(x_1,x_2,\dots.): x_k = 0 \text{ for $k$ sufficiently large}\}$,  is essentially self-adjoint, that is, $J$ has a unique self-adjoint extension in $\ell^2$. When the measure $\mu$ is unique, it is  called the spectral measure of $J$, or more precisely, the spectral measure of $(J, e_1)$. It is known that the condition 
\[
	\sum_{i = 1}^\infty \frac{1}{b_i} = \infty
\]
implies the essential self-adjointness of $J$, \cite[Corollary 3.8.9]{Simon2011}.

For the random Jacobi matrix $J_\alpha$, the above condition holds almost surely  because its off diagonal elements are positive i.i.d.~random variables. Thus spectral measures $\mu_\alpha$ are uniquely determined by the following relations  
\[
	\langle \mu_\alpha, x^k\rangle= J_\alpha^k(1,1), \quad k = 0,1,\dots.
\]
Then the mean spectral measure $\bar\mu_\alpha$ is defined to be a probability measure satisfying 
\[
	\langle \bar\mu_\alpha, f\rangle = \E[\langle \mu_\alpha, f\rangle], 
\] 
for all bounded continuous functions $f$ on $\R$. It then follows that 
\[
				\langle \bar \mu_\alpha, x^k\rangle = \E[\langle \mu_\alpha, x^k\rangle ], \quad k = 0,1,\dots,
\]
provided that the right hand side of the above equation is finite for all $k$.

The purpose of this paper is to identify the mean spectral measure $\bar \mu_\alpha$. Our main results are as follows.
\begin{thm}\label{thm:explicit-formula-for-mu-bar}
\begin{itemize}
\item[\rm (i)] The mean spectral measure $\bar \mu_\alpha$ coincides with the spectral measure of the non-random Jacobi matrix $A_\alpha$, where
\[
	A_\alpha = \begin{pmatrix}
		0		&\sqrt{\alpha + 1}	\\
		\sqrt{\alpha + 1}	&0		&\sqrt{\alpha + 2}		\\
												&\ddots		&\ddots		&\ddots
	\end{pmatrix}.
\]
\item[\rm (ii)]
The measure $\bar \mu_\alpha$ has the following density function
\[
		\bar \mu_\alpha (y) = \frac{e^{-y^2/2}}{\sqrt{2\pi}}\frac{1}{|\hat f_\alpha(y)|^2} ,
\]
where 
\[
	\hat f_\alpha(y) =\sqrt{\frac 2\pi} \int_0^\infty f_\alpha(t) e^{iyt}dt, \quad f_\alpha(t) = \pi \sqrt{\frac{\alpha}{\Gamma(\alpha)}} t^{\alpha - 1} \frac{e^{-\frac {t^2}{2}}}{\sqrt{2\pi}}.
\]
\end{itemize}
\end{thm}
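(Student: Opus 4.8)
The plan is to reduce part (i) to a moment computation and to obtain both parts from a single transform identity. Since $A_\alpha$ has off-diagonal $b_n=\sqrt{\alpha+n}$ with $\sum_n b_n^{-1}=\infty$, its moment problem is determinate (Carleman's condition holds for the resulting $\sim\sqrt n$ growth), and the same growth bound controls $\E[J_\alpha^k(1,1)]$; hence it suffices to prove $\E[J_\alpha^k(1,1)]=A_\alpha^k(1,1)$ for all $k$, together with the explicit density. Expanding $J_\alpha^k(1,1)$ over nearest-neighbour walks on $\{1,2,\dots\}$ and using independence, each walk contributes $\prod_v\mu_{n_v}\prod_v(\alpha)_{m_v}$, where $n_v$ is the number of self-loops at $v$, $2m_v$ the number of crossings of the edge $\{v,v+1\}$, $\mu_n=\E[\Normal(0,1)^n]=(n-1)!!$ (zero for odd $n$), and $(\alpha)_m=\E[\chitilde_{2\alpha}^{2m}]=\alpha(\alpha+1)\cdots(\alpha+m-1)$. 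By contrast $A_\alpha^k(1,1)$ sees only loop-free walks, with weight $\prod_v(\alpha+v)^{m_v}$. The whole point is therefore that the Gaussian self-loops, once averaged, must manufacture precisely the shift $\alpha\mapsto\alpha+v$ on the edge $\{v,v+1\}$; a direct walk-by-walk bijection looks unpleasant, so I would instead package everything into a transform.

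The structural input is that the tail of $J_\alpha$ beyond any vertex is again distributed as $J_\alpha$ and is independent of the entries at that vertex; writing $\phi(z)=\langle(z-J_\alpha)^{-1}e_1,e_1\rangle$ this gives the recursive distributional identity $\phi\stackrel{d}{=}(z-a-b^2\phi')^{-1}$ with $a\sim\Normal(0,1)$, $b\sim\chitilde_{2\alpha}$ (so $b^2\sim$ Gamma$(\alpha)$), and $\phi'\stackrel{d}{=}\phi$ independent of $a,b$. Because this law is level-independent while the continued fraction of $A_\alpha$ is not, one cannot match the two continued fractions level by level; the mean measure must be computed outright. I would do this through the characteristic function $\varphi(t)=\E\langle e^{-itJ_\alpha}e_1,e_1\rangle=\int e^{-ity}\,d\bar\mu_\alpha(y)$. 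Differentiating in $t$ produces the usual hierarchy coupling $\E\langle e^{-itJ_\alpha}e_1,e_j\rangle$, and the two averagings needed are exactly the ones available in closed form: Gaussian integration by parts $\E[aF(a)]=\E[F'(a)]$ for the diagonal, and the Gamma identities $\E[e^{-ub^2}]=(1+u)^{-\alpha}$ and $\E[b^2F(b^2)]=\alpha\,\E_{\alpha+1}[F(b^2)]$ for the off-diagonal, the latter being responsible for the parameter shift. I expect the hierarchy to close into a single self-convolutive recurrence for $\varphi$, which on the Fourier/Laplace side reduces to the first-order equation $t f'(t)+(t^2-\alpha+1)f(t)=0$; its one-sided solution is $f(t)\propto t^{\alpha-1}e^{-t^2/2}$, i.e.\ $f_\alpha$, and Fourier inversion then yields the density $\bar\mu_\alpha(y)=\frac{e^{-y^2/2}}{\sqrt{2\pi}}|\hat f_\alpha(y)|^{-2}$ of part (ii), the one-sidedness of $f_\alpha$ being the Jost-function analyticity that underlies the $|\hat f_\alpha|^{-2}$ form.

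With the explicit $\bar\mu_\alpha$ in hand, part (i) follows by identifying its Jacobi data: I would show that the Cauchy transform $\int(z-y)^{-1}\,d\bar\mu_\alpha(y)$ has the continued-fraction expansion with $a_n=0$ and $b_n^2=\alpha+n$, equivalently that the orthonormal polynomials for $\bar\mu_\alpha$ obey $x p_n=\sqrt{\alpha+n+1}\,p_{n+1}+\sqrt{\alpha+n}\,p_{n-1}$ (these are associated Hermite polynomials, whose weight is classically of exactly this $e^{-y^2/2}/|\cdot|^2$ type). This pins $\bar\mu_\alpha$ down as the spectral measure of $A_\alpha$ and simultaneously re-proves the moment identity $\E[J_\alpha^k(1,1)]=A_\alpha^k(1,1)$.

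The main obstacle is the closure step. Naive first-return or continued-fraction recursions fail at the level of means because the excursions into the tail share the same tail randomness, coupling all levels and bringing in higher moments of $\phi$; the role of the Gaussian and Gamma integration-by-parts identities is precisely to absorb this coupling and collapse the hierarchy to a single self-convolutive recurrence. Verifying that this recurrence integrates to the first-order ODE for $f_\alpha$ — rather than to some more complicated equation — is the crux, and I would expect the remaining difficulty to be the special-function bookkeeping (parabolic cylinder / confluent hypergeometric functions) needed to pass rigorously from $\varphi$ to the boundary density and to read off the recurrence coefficients $\sqrt{\alpha+n}$.
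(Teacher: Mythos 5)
The central steps of your plan are not carried out, and there is a concrete reason the main one fails as stated. Your distributional fixed point $\phi \overset{d}{=} (z-a-b^2\phi')^{-1}$ is \emph{nonlinear} in $\phi'$, so averaging it (or the hierarchy obtained by differentiating $\varphi(t)$) produces all mixed moments of the resolvent, i.e.\ the full law of $\phi$, not a scalar equation; and the Gamma shift identity $\E[b^2F(b^2)]=\alpha\,\E_{\alpha+1}[F(b^2)]$ re-weights only the single entry $b_1$, leaving a matrix whose first off-diagonal entry is $\chitilde_{2(\alpha+1)}$ while all others remain $\chitilde_{2\alpha}$ --- this is not distributed as $J_{\alpha'}$ for any $\alpha'$, so the hierarchy does not close at fixed $\alpha$ (exactly the coupling you yourself identify as the obstacle). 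You assert that Gaussian and Gamma integration by parts ``absorb'' this, but you give no mechanism, and you read the target ODE $tf'(t)+(t^2-\alpha+1)f(t)=0$ backwards from the known answer $f_\alpha\propto t^{\alpha-1}e^{-t^2/2}$. There is no visible route from a linear hierarchy for $\varphi$ to a density of the form $e^{-y^2/2}|\hat f_\alpha(y)|^{-2}$ --- a \emph{reciprocal squared modulus} of a Fourier transform --- so the crux is named but not resolved, and both (i) and (ii) in your proposal hang on it.

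For comparison, the paper inverts your logical order and supplies two inputs your sketch has no substitute for. Part (i) is proved first, without the density, via the Dumitriu--Edelman/Witte--Forrester duality $m_p(N,\hat\beta)=(-1)^p\hat\beta^p\, m_p(-\hat\beta N,\hat\beta^{-1})$ combined with the deterministic $\hat\beta\to\infty$ limit $H_N$ and polynomiality of $m_p$ in $N$, giving $m_p(N,\alpha/N)\to A_\alpha^{2p}(1,1)$; your walk expansion (whose weights $\prod_v(2n_v-1)!!\prod_v(\alpha)_{m_v}$ are correct) would otherwise require exactly the ``unpleasant bijection'' you set aside. The parameter-shift phenomenon you noticed is then exploited for the \emph{non-random} matrix: the first-return Dyck-path decomposition of $A_\alpha^{2n}(1,1)$ yields the mixed-parameter relation $u_n(\alpha)=(\alpha+1)\sum_{i=0}^{n-1}u_i(\alpha+1)u_{n-1-i}(\alpha)$, and a formal Laurent-series lemma shows this relation transfers the self-convolutive recurrence $u_n=(2n-1)u_{n-1}+\alpha\sum_i u_iu_{n-1-i}$ from $\alpha$ to $\alpha+1$; anchoring at the Gaussian case $u_n(0)=(2n-1)!!$ and invoking polynomiality of $u_n$ in $\alpha$ proves it for all $\alpha$, after which the Martin--Kearney closed-form solution of self-convolutive recurrences gives the density (the ${}_1F_1$ combinations are only afterwards recognized as $\F_c(f_\alpha)$ and $\F_s(f_\alpha)$). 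Your fallback for (i) --- identifying the orthogonal polynomials of the explicit density as associated Hermite polynomials with recurrence coefficients $\sqrt{\alpha+n}$ (Askey--Wimp) --- is sound in itself, but circular in your architecture since it presupposes (ii). To repair the proposal you need either the duality input for (i) or a rigorous bootstrap replacing the paper's Laurent-series lemma for (ii).
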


Let us sketch out main ideas for the proof of the above theorem. To show the first statement, the key idea is to regard the Jacobi matrix $J_\alpha$ as the limit of G$\beta$E as the matrix size $N$ tends to infinity with $N\beta = 2\alpha$. More specifically, let $T_N(\beta)$ be a finite random Jacobi matrix whose components are (up to the symmetry constraints) independent and are distributed as
\[
	T_N(\beta) = \begin{pmatrix}
		\Normal(0,1)		&\chitilde_{(N-1)\beta}	\\
		\chitilde_{(N-1)\beta}	&\Normal(0,1)		&\chitilde_{(N-2)\beta}		\\
					
												&\ddots		&\ddots		&\ddots \\
						
						&&					\chitilde_{\beta}	&\Normal(0,1)
	\end{pmatrix}.
\]
Then it is well known in random matrix theory that the eigenvalues of $T_N(\beta)$ are distributed as G$\beta$E, namely, 
\begin{equation*}
	(\lambda_1, \dots, \lambda_N) \propto  \prod_{l = 1}^N e^{-\lambda_l^2 /2} \prod_{1 \le j < k \le N} |\lambda_k - \lambda_j|^\beta.
\end{equation*}
Moreover, by letting $N \to \infty$ with $\beta = 2\alpha/N$, the matrices $T_N(\beta)$ converge, in some sense, to $J_\alpha$. 
That crucial observation together with a result on moments of G$\beta$E (\cite[Theorem~2.8]{DE06}) makes it possible to show that $\bar \mu_\alpha$ coincides with the spectral measure of $A_\alpha$.

The next step is to establish the following self-convolutive recurrence for even moments of $\bar \mu_\alpha$, 
\[
	u_n(\alpha) = (2n -1)u_{n-1}(\alpha) + \alpha \sum_{i = 0}^{n - 1} u_i(\alpha) u_{n - 1 - i}(\alpha),
\]
where $u_n(\alpha)$ is the $2n$th moment of $\bar \mu_\alpha$. Note that its odd moments are all vanishing because the spectral measure of $A_\alpha$ is symmetric. Finally, the explicit formula for $\bar \mu_\alpha$ is derived by using the method in \cite{Martin2010}. 

The paper is organized as follows. In the next section, we mention some known results on G$\beta$E needed in this paper. In Section~3, we introduce the matrix model and step by step, prove the main theorem.

\section{A result on Gaussian $\beta$-ensembles}
The Jacobi matrix model for G$\beta$E, a finite random Jacobi matrix, was discovered by Dumitriu and Edelman \cite{DE02}. First of all, let us mention some preliminary facts about finite Jacobi matrices. Assume that $J$ is a finite Jacobi matrix of order $N$ (with the requirement that the off diagonal elements are positive). Then the matrix $J$ has exactly $N$ distinct eigenvalues $\lambda_1, \lambda_2, \dots, \lambda_N$. Let $v_1, v_2, \dots, v_N$ be the corresponding eigenvectors which are chosen to be an orthonormal basis in $\R^N$. Then the spectral measure $\mu$, which is well defined by $\langle \mu, x^k \rangle = J^k(1,1), k=0,1,\dots,$ can be  expressed as
\[
	\mu = \sum_{j = 1}^N q_j^2 \delta_{\lambda_j}, \quad q_j = |v_j(1)|,
\]
where $\delta_\lambda$ denotes the Dirac measure. It is known that a finite Jacobi matrix of order $N$ is one-to-one correspondence with a probability measure supported on $N$ points, or a set of Jacobi matrix parameters $\{a_i\}_{i = 1}^N, \{b_j\}_{j = 1}^{N - 1}$ is one-to-one correspondence with the spectral data $\{\lambda_i\}_{i = 1}^N, \{q_j\}_{j = 1}^N$.

The Jacobi matrix model for G$\beta$E is defined as follows. Let $\{a_i\}_{i = 1}^N$ be an i.i.d.~sequence of standard Gaussian  $\Normal(0,1)$ random variables and $\{b_j\}_{j = 1}^{N - 1}$ be a sequence of independent random variables having $\tilde \chi$ distributions with parameters $(N-1)\beta, (N-2)\beta, \dots, 1$, respectively, which is independent of $\{a_i\}_{i = 1}^N$. Here $\tilde \chi_k,$ for $k > 0$, denotes the distribution with the following probability density function 
\[
	\frac{2}{\Gamma(k/2)} u^{k - 1} e^{-u^2}, u >0,
\]
which is nothing but $\chi_k/\sqrt{2}$, or the square root of the gamma distribution with parameter $(k/2,1)$. 
We form a random Jacobi matrix $T_N(\beta)$ from $\{a_i\}_{i = 1}^N$ and $\{b_j\}_{j = 1}^{N - 1}$ as follows,    
\[
	T_N(\beta) = \begin{pmatrix}
		\Normal(0,1)		&\chitilde_{(N-1)\beta}	\\
		\chitilde_{(N-1)\beta}	&\Normal(0,1)		&\chitilde_{(N-2)\beta}		\\
					
												&\ddots		&\ddots		&\ddots \\
						
						&&					\chitilde_{\beta}	&\Normal(0,1)
	\end{pmatrix}.
\]
Then the eigenvalues $\{\lambda_i\}_{i = 1}^N$ and the weights $\{q_j\}_{j = 1}^N$ are independent, with the distribution of the former given by
\[
	(\lambda_1,\lambda_2, \dots, \lambda_N) \propto  \prod_{l = 1}^N e^{-\lambda_l^2 /2} \prod_{1 \le j < k \le N} |\lambda_k - \lambda_j|^\beta, 
\]
and the distribution of the latter given by
\[
	(q_1, q_2, \dots, q_N) \propto \frac{1}{ q_N} \prod_{i = 1}^N q_i^{\beta - 1}, \quad (q_i > 0, \sum_{i = 1}^N q_i^2 = 1).
\]
It is also known that $q = (q_1, \dots, q_N)$ is distributed as a vector $(\chitilde_{\beta}, \dots, \chitilde_{\beta})$ with i.i.d.~components, normalized to unit length.

The trace of $T_N(\beta)^n$ and $T_N(\beta)^n (1, 1)$ can be expressed in term of the spectral data as
\[
	\Tr (T_N(\beta)^n) = \sum_{j = 1}^N \lambda_j ^n, \quad T_N(\beta)^n (1, 1) = \sum_{j = 1}^N q_j^2 \lambda_j ^n.
\]
Consequently,
\begin{align*}
	\E[T_N(\beta)^n (1, 1)] &= \E[\sum_{j = 1}^N q_j^2 \lambda_j ^n] = \sum_{j = 1}^N \E[q_j^2] \E[\lambda_j^n] = \frac{1}{N}\sum_{j = 1}^N \E[\lambda_j^n] \\
	&= \frac{1}{N} \E[\Tr (X_N(\beta)^n)].
\end{align*}

In the rest of this section, for convenience, we use the parameter $\betah = \beta/2$. Let $m_{p}(N,\betah) = \E[T_N(2\betah)^{2p} (1, 1)]$. It is clear that $m_{p}(N, \betah)$ is a polynomial of degree $p$ in $N$, and thus $m_{p}(N, \betah)$ is defined for all $N \in \R$. 
Then a result for the trace of $T_N(\beta)^n$ can be rewritten for $m_{p}(N,\betah)$ as follows.
\begin{thm}[cf. {\cite[Theorem~2.8]{DE06}} and {\cite[Theorem~2]{Forrester2013}}]\label{thm:Forrester2013-Thm2}
It holds that
\[
	m_{p}(N , \betah) = (-1)^p \betah^p m_{p}(-\betah N, \betah^{-1}).
\]
\end{thm}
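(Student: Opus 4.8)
The plan is to exploit the identity recorded just above the statement, namely that the $(1,1)$-moment and the normalized trace coincide, so that
\[
	m_p(N,\betah) = \frac{1}{N}\,\E[\Tr T_N(2\betah)^{2p}] = \frac{1}{N}\,\E\Big[\textstyle\sum_j \lambda_j^{2p}\Big],
\]
where the $\lambda_j$ are distributed as G$\beta$E with $\beta = 2\betah$. Read this way, the theorem is exactly the classical $\beta \leftrightarrow 4/\beta$ duality for the expected power sums of the Gaussian $\beta$-ensemble, transcribed into the variable $\betah = \beta/2$ (so that $\beta \mapsto 4/\beta$ becomes $\betah \mapsto 1/\betah$) and with the matrix dimension analytically continued by $N \mapsto -N\beta/2 = -\betah N$. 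First I would invoke that duality in the form proved in \cite[Theorem~2.8]{DE06} (equivalently \cite[Theorem~2]{Forrester2013}) for $\E[\sum_j \lambda_j^{2p}]$, and then translate it into the normalization used here.

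The bulk of the honest work is bookkeeping. The cited references use their own conventions for the Gaussian weight and for the scaling of the eigenvalues, whereas our $T_N$ has a standard $\Normal(0,1)$ diagonal and off-diagonal entries $\chitilde_{(N-j)\cdot 2\betah}$, so that $\E[b_j^{2k}] = \prod_{l=0}^{k-1}\big((N-j)\betah + l\big)$ and in particular $\E[b_j^2] = (N-j)\betah$. After aligning conventions I would track the scalar prefactor produced by the duality together with the change of the $\frac1N$ normalization into $\frac{1}{-\betah N}$, and check that everything collapses to exactly $(-1)^p\betah^p$. I would pin this prefactor down on the small cases: $m_0 \equiv 1$ matches trivially, and $m_1(N,\betah) = \E[a_1^2] + \E[b_1^2] = 1 + (N-1)\betah$, which indeed equals $-\betah\, m_1(-\betah N, 1/\betah) = -\betah\big(1 - N - 1/\betah\big)$. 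For the degree count I would expand $T_N(2\betah)^{2p}(1,1)$ over closed walks on the path graph: since that graph is a tree, each edge is crossed an even number of times, $\E[b_j^{2k}]$ contributes $k$ linear factors in $N$, and the total $N$-degree is at most $p$; this reconfirms that $m_p(\cdot,\betah)$ is a polynomial of degree $p$ in $N$, which is precisely what licenses the analytic continuation $N \mapsto -\betah N$.

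The step I expect to be the genuine obstacle is that the duality is emphatically \emph{not} termwise in this walk expansion. Already at $p=1$ one sees that the diagonal walk $1\to1\to1$ and the off-diagonal walk $1\to2\to1$ are individually \emph{not} preserved by the map $N\mapsto -\betah N$, $\betah\mapsto 1/\betah$ (times $-\betah$): the diagonal term acquires a spurious $-\betah$, the off-diagonal term a spurious $+1$, and only in the sum do these cancel. A fully self-contained proof would therefore have to engineer these global cancellations, which is exactly where the Jack-polynomial / Selberg-integral structure underlying \cite{DE06, Forrester2013} does the work, and I would not attempt to reproduce it by an elementary involution. Accordingly my proof reduces the statement to the cited duality and spends its effort on matching normalizations and on the polynomiality-based continuation, rather than on a bare-hands combinatorial identity. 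An alternative route, if one insisted on staying self-contained, would be to derive a recursion in $p$ for $m_p$ and verify that it, together with the initial condition $m_0 \equiv 1$, is invariant under $(N,\betah)\mapsto(-\betah N, 1/\betah)$ accompanied by the factor $(-1)^p\betah^p$, concluding by induction on $p$.
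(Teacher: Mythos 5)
Your proposal takes essentially the same route as the paper: the paper offers no independent proof of this duality either, but obtains it by combining the identity $m_p(N,\betah)=\frac1N\E[\Tr T_N(2\betah)^{2p}]$ (derived just above the statement from the independence of eigenvalues and weights, with $\E[q_j^2]=1/N$) with the trace duality cited from \cite{DE06} and \cite{Forrester2013}, the polynomiality of $m_p$ in $N$ licensing the continuation $N\mapsto-\betah N$. Your convention checks at $p=0,1$ and the walk-expansion degree count are correct supplementary verifications, not a different approach.
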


Observe that $\betah^{-p} m_{p}(N, \betah)$ is the expectation of the $2p$th moment of the spectral measure of the following Jacobi matrix
\[
	\frac{1}{\sqrt{\betah}}T_N(2\betah) = \frac{1}{\sqrt{\betah}}\begin{pmatrix}
		\Normal(0,1)		&\chitilde_{(N-1)2\betah}	\\
		\chitilde_{(N-1)2\betah}	&\Normal(0,1)		&\chitilde_{(N-2)2\betah}		\\
						
												&\ddots		&\ddots		&\ddots \\
						&&					\chitilde_{2\betah}	&\Normal(0,1)
	\end{pmatrix}.
\]
As $\betah \to \infty$, it holds that 
\[
	\frac{\Normal(0,1)}{\sqrt{\betah}} \to 0, \quad \frac{\chitilde_{k 2\betah}}{\sqrt{\betah}} = \left( \frac{\Gamma(k\betah, 1)}{\betah}\right)^{1/2}  \to \sqrt{k} \text{ (in $L^q$ for any $q \ge 1$).}
\]
The convergences also hold almost surely. Therefore as $\betah \to \infty$,
\[
	\frac{1}{\sqrt{\betah}}T_N(2\betah) \to \begin{pmatrix}
		0		& \sqrt{N - 1}\\
		\sqrt{N - 1}	&0		&\sqrt{N - 2}		\\
												&\ddots		&\ddots		&\ddots \\
						&&					1&0
	\end{pmatrix} =: H_N.
\]
Here the convergence of matrices means the convergence (in $L^q$) of their elements.
Let $h_{p}(N)= H_N^{2p}(1,1) $ for  $N >p$. Then $h_{p}(N)$ is a polynomial of degree $p$ in $N$ so that $h_{p}(N)$ is defined for all $N \in \R$. The above convergence of matrices implies that for fixed $p$ and fixed $N$,
\begin{equation}\label{kappa-to-infty}
	h_{p}(N) = \lim_{\betah \to \infty} \betah^{-p} m_{p}(N, \betah).
\end{equation}

Let 
\[
	A_\alpha = \begin{pmatrix}
		0		&\sqrt{\alpha + 1}	\\
		\sqrt{\alpha + 1}	&0		&\sqrt{\alpha + 2}		\\
											&\ddots		&\ddots		&\ddots
	\end{pmatrix},
\]
and let $u_{p}(\alpha) = A_\alpha^{2p}(1,1)$. Then $u_{p}(\alpha)$ is also a polynomial of degree $p$ in $\alpha$. In addition, it is easy to see that
\begin{equation}\label{relation-of-u-and-h}
	u_{p}(\alpha) = (-1)^p h_{p}(-\alpha).
\end{equation}
As a direct consequence of Theorem~\ref{thm:Forrester2013-Thm2} and relations \eqref{kappa-to-infty} and \eqref{relation-of-u-and-h}, we get the following result.
\begin{prop}\label{thm:limit-of-m}
	As $N \to \infty$ with $\betah = \betah(N) = \alpha/N$,
	\[
		m_{p}(N, \betah) \to u_{p}(\alpha) =A^{2p}_\alpha(1,1).
	\]
\end{prop}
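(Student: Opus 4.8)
The plan is to obtain the proposition directly from the three ingredients already assembled in the excerpt: the duality in Theorem~\ref{thm:Forrester2013-Thm2}, the large-$\betah$ limit \eqref{kappa-to-infty}, and the sign relation \eqref{relation-of-u-and-h}. The first move is to specialize the duality identity $m_p(N,\betah) = (-1)^p \betah^p m_p(-\betah N, \betah^{-1})$ at $\betah = \alpha/N$. Since then $-\betah N = -\alpha$ and $\betah^{-1} = N/\alpha$, this produces
\[
	m_p\!\left(N, \frac{\alpha}{N}\right) = (-1)^p \left(\frac{\alpha}{N}\right)^p m_p\!\left(-\alpha, \frac{N}{\alpha}\right) = (-1)^p (\betah')^{-p} m_p(-\alpha, \betah'), \qquad \betah' := \frac{N}{\alpha}.
\]
The crucial observation is that as $N \to \infty$ with $\alpha$ fixed, the \emph{first} argument $-\alpha$ of $m_p$ on the right stays frozen while the \emph{second} argument $\betah' = N/\alpha$ tends to infinity, so the right-hand side is exactly of the shape governed by \eqref{kappa-to-infty}.

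Granting that \eqref{kappa-to-infty} may be applied with first argument $-\alpha$, one reads off $(\betah')^{-p} m_p(-\alpha, \betah') \to h_p(-\alpha)$, and then \eqref{relation-of-u-and-h} gives
\[
	m_p\!\left(N, \frac{\alpha}{N}\right) \longrightarrow (-1)^p h_p(-\alpha) = u_p(\alpha) = A_\alpha^{2p}(1,1),
\]
which is the claim. So the entire content is the substitution followed by a single limit, provided the limit \eqref{kappa-to-infty} is legitimate at the point $-\alpha$.

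I expect the only real obstacle to be precisely this last point: \eqref{kappa-to-infty} has been justified only for a fixed \emph{integer} matrix size $N > p$ (via the entrywise convergence $\betah^{-1/2} T_N(2\betah) \to H_N$), whereas here I must evaluate it at the fixed real value $-\alpha$, which is generally negative and non-integer. I would close this gap using the polynomial structure recorded earlier: for each fixed $\betah$ the map $N \mapsto \betah^{-p} m_p(N, \betah)$ is a polynomial of degree at most $p$, and so is $h_p$, hence their difference $g_{\betah}(N) := \betah^{-p} m_p(N, \betah) - h_p(N)$ is a polynomial of degree at most $p$. By \eqref{kappa-to-infty}, $g_{\betah}(N) \to 0$ as $\betah \to \infty$ at each of the $p+1$ distinct integer points $N = p+1, \dots, 2p+1$. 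Since the coefficients of a polynomial of degree at most $p$ are recovered from its values at $p+1$ distinct points through a fixed invertible (Vandermonde) linear map, each coefficient of $g_{\betah}$ tends to $0$, and therefore $g_{\betah}(N) \to 0$ for every real $N$, in particular for $N = -\alpha$. This yields $\betah^{-p} m_p(-\alpha, \betah) \to h_p(-\alpha)$ as $\betah \to \infty$, which is exactly what the limit step above requires, completing the argument.
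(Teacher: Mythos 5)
Your proposal is correct and is essentially the paper's own proof: the paper obtains the proposition ``as a direct consequence'' of Theorem~\ref{thm:Forrester2013-Thm2} and relations \eqref{kappa-to-infty} and \eqref{relation-of-u-and-h}, which is precisely your substitution $\betah = \alpha/N$ followed by the limit $\betah' = N/\alpha \to \infty$ and the sign flip $(-1)^p h_p(-\alpha) = u_p(\alpha)$. The only place you go beyond the paper is the Vandermonde interpolation step legitimizing \eqref{kappa-to-infty} at the fixed negative non-integer argument $-\alpha$: the paper invokes this silently (its matrix-convergence justification of \eqref{kappa-to-infty} covers only integer $N > p$), so your extra argument closes a small gap that the published one-line proof leaves implicit.
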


\section{Random Jacobi matrices related to Gaussian $\beta$ ensembles}
\subsection{A matrix model and proof of Theorem~\ref{thm:explicit-formula-for-mu-bar}(i)}
Consider the following random Jacobi matrix
\[
	J_{\alpha} = \begin{pmatrix}
		\Normal(0,1)		&\chitilde_{2\alpha}	\\
		\chitilde_{2\alpha}	&\Normal(0,1)		&\chitilde_{2\alpha}	\\
										&\ddots		&\ddots		&\ddots
	\end{pmatrix},
\]
where all components are independent random variables. More precisely, the diagonal $\{a_i\}_{i = 1}^\infty$ is an i.i.d.~sequence of standard Gaussian  $\Normal(0,1)$ random variables and the off diagonal $\{b_j\}_{j = 1}^\infty$ is another i.i.d.~sequence of $\chitilde_{2 \alpha}$ random variables. Then the spectral measure $\mu_\alpha$ of $J_{\alpha}$ exists and is unique almost surely because 
\[
	\sum_{j = 1}^\infty \frac{1}{b_j} = \infty \text{(almost surely).}
\]

The mean spectral measure $\bar \mu_\alpha$ is defined to be a probability measure satisfying
\[
	\langle \bar \mu_\alpha, f\rangle = \E[\langle \mu, f\rangle],  
\]
for all bounded continuous functions $f$ on $\R$. Then Theorem~{\rm\ref{thm:explicit-formula-for-mu-bar}(i)} states that the measure $\bar \mu_\alpha$ coincides with the spectral measure of $(A_\alpha, e_1)$.
\begin{proof}[Proof of Theorem~{\rm\ref{thm:explicit-formula-for-mu-bar}(i)}]
Note that the spectral measure of $A_\alpha$, a probability measure $\mu$ satisfying
\[
	\langle \mu, x^k \rangle = A_\alpha^k(1,1),\quad k = 0,1,\dots,
\]
is unique because
\[
	\sum_{j = 1}^\infty \frac{1}{\sqrt{\alpha + j}} = \infty.
\]
Also, it is clear that 
\[
	\langle \bar \mu_\alpha, x^k\rangle = \E[\langle \mu_\alpha, x^k\rangle ], \quad k = 0,1,\dots, 
\]
because $\E[\langle \mu_\alpha, |x|^k \rangle] < \infty$ for all $k=0,1,\dots.$
Therefore, our task is now to show that for all $k=0,1,\dots,$ 
\begin{equation}\label{moments-of-mu-bar}
	\langle \bar \mu_\alpha, x^k\rangle = A_\alpha^k(1,1).
\end{equation}

We consider the case of even $k$ first. For any fixed $j$, all moments of the $\chitilde_{(N-j)2\betah}$ distribution converge to those of the $\chitilde_{2\alpha}$ distribution as $N \to \infty$ with $\betah = \alpha/N$. Thus for fixed $p$, as $N \to \infty$ with $\betah = \alpha/N$,
\[
	m_{p}(N, \betah)=\E[T_N(2\betah)^{2p}(1,1)] \to \E[J_\alpha^{2p} (1,1)] = \E[\langle  \mu_\alpha, x^{2p}\rangle] .
\]
Consequently, for even $k$, namely, $k=2p$,
\[
	\langle \bar \mu_\alpha, x^{k}\rangle 
= A_\alpha^{k} (1,1),
\]
by taking into account Proposition~\ref{thm:limit-of-m}.

For odd $k$, both sides of the equation~\eqref{moments-of-mu-bar} are zeros. Indeed, $A^k_\alpha(1,1)=0$ when $k$ is odd because the diagonal of $A_\alpha$ is zero. Also all odd moments of $\bar \mu_\alpha$ are vanishing, 
\[
	\langle \bar \mu_\alpha, x^{2p+1}\rangle = \E[\langle  \mu_\alpha, x^{2p+1}\rangle] = 0,
\] 
because the expectation of odd moments of any diagonal element of $J_\alpha$ are zero. The proof is completed.
\end{proof}



\subsection{Moments of the spectral measure of $A_\alpha$}

Recall that 
\[
	u_n(\alpha) = A_{\alpha}^{2n} (1,1), n=0,1,\dots.
\]

\begin{prop}\label{lem:self-recurrence-relation}
\begin{itemize}
\item[\rm (i)]
	$u_n(\alpha)$ is a polynomial of degree $n$ in $\alpha$ and satisfies the following relations
	\begin{equation}\label{usual-relation}
		\begin{cases}
		u_n(\alpha) = (\alpha + 1) \sum_{i = 0}^{n - 1} u_i(\alpha +1) u_{n-1-i}(\alpha), \quad n \ge 1, \\
		u_0(\alpha) = 1.
		\end{cases}
	\end{equation}
\item[\rm (ii)] $\{u_n(\alpha)\}_{n = 0}^\infty$ also satisfies the following relations
	\begin{equation}\label{recursive-relation-un}
		\begin{cases}
		u_n(\alpha) = (2n -1)u_{n-1}(\alpha) + \alpha \sum_{i = 0}^{n - 1} u_i(\alpha) u_{n - 1 - i}(\alpha), \quad n \ge 1, \\
		u_0(\alpha) = 1.
		\end{cases}
	\end{equation}
\end{itemize}
\end{prop}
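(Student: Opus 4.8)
The plan is to read $u_n(\alpha)=A_\alpha^{2n}(1,1)$ as a weighted walk count and to prove the two recurrences by two genuinely different manipulations. Since $A_\alpha$ has zero diagonal, $A_\alpha^{2n}(1,1)$ is the sum over all Dyck paths of semilength $n$ (paths of $n$ up- and $n$ down-steps from height $0$ to height $0$ staying nonnegative) of the weight $\prod_h (\alpha+h+1)^{m_h}$, where $m_h$ counts the up-steps from height $h$ to $h+1$ and the edge at height $h$ contributes $b_{h+1}^2=\alpha+h+1$ per up--down traversal. Each path thus contributes a product of exactly $n$ linear, monic-in-$\alpha$ factors, so summing over the $C_n$ Dyck paths shows at once that $u_n$ is a polynomial in $\alpha$ of degree exactly $n$ with leading coefficient the Catalan number $C_n$; this settles the degree assertion in (i). For the recurrence in (i) I would use the first-return decomposition: a nonempty Dyck path factors uniquely as an initial up-step, a path $W_1$ living at heights $\ge 1$ that returns to height $1$, the down-step of first return, and a remaining Dyck path $W_2$. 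The initial up-step and the first-return down-step are both at height $0$ and together contribute $\alpha+1$; shifting $W_1$ down by one level turns its $A_\alpha$-weight into the $A_{\alpha+1}$-weight of a Dyck path of semilength $i$ (because the height-$h$ edge weight $\alpha+h+1$ becomes $(\alpha+1)+(h-1)+1$ after the shift), so $W_1$ sums to $u_i(\alpha+1)$ and $W_2$ to $u_{n-1-i}(\alpha)$. Summing over $i$ gives \eqref{usual-relation}.

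For (ii) the plan is to recast $A_\alpha$ in ladder-operator form. I would set $B e_k=\sqrt{\alpha+k-1}\,e_{k-1}$ (with $e_0:=0$) and $B^\dagger e_k=\sqrt{\alpha+k}\,e_{k+1}$, so that $A_\alpha=B+B^\dagger$, that $B^\dagger$ is the adjoint of $B$, and that $B e_1=0$, i.e.\ $e_1$ is a ``vacuum''. The decisive computation is the commutator: on $e_k$ with $k\ge 2$ one gets $BB^\dagger e_k-B^\dagger B e_k=(\alpha+k)-(\alpha+k-1)=e_k$, while on $e_1$ one gets $(\alpha+1)e_1-0$, so $[B,B^\dagger]=I+\alpha P$, where $P$ is the orthogonal projection onto $e_1$. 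Then $u_n=\langle (B+B^\dagger)^{2n}e_1,e_1\rangle$, and writing $X=B+B^\dagger$, using $Xe_1=B^\dagger e_1$ together with self-adjointness gives $u_n=\langle e_1, B X^{2n-1}e_1\rangle$. Commuting $B$ to the right through the $2n-1$ copies of $X$ by means of $[B,X]=[B,B^\dagger]=I+\alpha P$, and discarding the final term because $Be_1=0$, yields $u_n=\sum_{j=0}^{2n-2}\langle e_1, X^{j}(I+\alpha P)X^{2n-2-j}e_1\rangle$. The $I$-part contributes $X^{2n-2}$ in each of the $2n-1$ summands, i.e.\ $(2n-1)u_{n-1}(\alpha)$; the rank-one $\alpha P$-part factorizes each summand as $\alpha\langle X^{j}e_1,e_1\rangle\langle X^{2n-2-j}e_1,e_1\rangle$, which vanishes unless $j=2i$ is even and then equals $\alpha\,u_i(\alpha)u_{n-1-i}(\alpha)$. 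Summing over $i$ produces exactly \eqref{recursive-relation-un}.

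I expect part (i), being the standard first-return decomposition, to be routine; the only care needed is to verify that elevating the first arch really reproduces the $A_{\alpha+1}$-weighting, which is the ``$+1$ per level'' bookkeeping. The genuine obstacle is part (ii): the self-convolutive structure does not emerge from any naive re-indexing of \eqref{usual-relation}, and a matrix/walk recursion tends to generate an unclosed hierarchy of higher position-moments. What makes it work is spotting the correct algebraic framework, namely the canonical commutation relation deformed by the rank-one boundary term $\alpha P$; once this is in place, the identity part of the commutator manufactures the $(2n-1)u_{n-1}$ term and the projection $P$ splits a vacuum expectation into a product of two vacuum expectations, which is precisely the source of the convolution $\alpha\sum_i u_i u_{n-1-i}$. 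As a sanity check I would note that at $\alpha=0$ the commutator reduces to $I$ and \eqref{recursive-relation-un} becomes $u_n=(2n-1)u_{n-1}$, giving $u_n=(2n-1)!!$, the moments of $\Normal(0,1)$, consistent with $A_0$ being the Hermite Jacobi matrix.
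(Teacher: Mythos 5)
Your proof is correct, and while part (i) follows the paper essentially verbatim (the same weighted Dyck-path expansion of $A_\alpha^{2n}(1,1)$ and the same first-return decomposition, with the level shift converting $A_\alpha$-weights of the elevated arch into $A_{\alpha+1}$-weights), your part (ii) takes a genuinely different route. The paper argues indirectly: it first proves, by a formal power-series computation, a transfer lemma stating that if $\{a_n\}$ satisfies \eqref{recursive-relation-un} with parameter $\alpha$ and $\{b_n\}$ is defined from $\{a_n\}$ by the convolution relation coming from \eqref{usual-relation} (so $a_n = u_n(\alpha)$, $b_n = u_n(\alpha+1)$), then $\{b_n\}$ satisfies \eqref{recursive-relation-un} with parameter $\alpha+1$; it then seeds an induction at $\alpha = 0$ using $u_n(0) = (2n-1)!!$ (Gaussian moments), obtains the recurrence for all $\alpha \in \N$, and finally extends to all real $\alpha$ by the fact that both sides are polynomials in $\alpha$. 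Your argument instead writes $A_\alpha = B + B^\dagger$ with the rank-one-deformed commutation relation $[B, B^\dagger] = I + \alpha P$ ($P$ the projection onto $e_1$) and derives \eqref{recursive-relation-un} directly for every $\alpha$ at once: commuting $B$ through $X^{2n-1}$, the identity part of the commutator produces $(2n-1)u_{n-1}(\alpha)$ and the projection part factorizes each term into a product of two vacuum expectations, giving $\alpha \sum_i u_i(\alpha)u_{n-1-i}(\alpha)$ after the odd terms vanish by parity. I checked the computations: $BB^\dagger e_k = (\alpha+k)e_k$ for all $k \ge 1$, $B^\dagger B e_k = (\alpha + k - 1)e_k$ for $k \ge 2$, $B^\dagger B e_1 = 0$, the expansion $BX^{2n-1} = X^{2n-1}B + \sum_{j=0}^{2n-2} X^j[B,X]X^{2n-2-j}$, and the factorization through $P$ are all correct; unboundedness of the operators is harmless because every vector involved is finitely supported, so all identities are algebraic. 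What your route buys is directness and structural insight — no generating-function lemma, no induction over integer $\alpha$, no appeal to polynomiality, and a clear explanation of where the self-convolutive form comes from (the rank-one boundary term in the canonical commutation relation). What the paper's route buys is that it stays within elementary power-series manipulation and makes explicit the link between the two recurrences \eqref{usual-relation} and \eqref{recursive-relation-un}, which has independent combinatorial interest.
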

\begin{remark}
	The sequences $\{u_n(\alpha)\}_{n \ge 0}$,  for $\alpha = 1$ and $\alpha = 2$, are the sequences A000698 and A167872 in the On-line Encyclopedia of Integer Sequences \cite{OEIS}, respectively. Relations \eqref{usual-relation} and \eqref{recursive-relation-un} as well as many interesting properties for those sequences can be found in the above reference. In the proof below, we give another explanation of $u_n(\alpha)$ as the total sum of weighted Dyck paths of length $2n$.
\end{remark}

\begin{proof}

In this proof, for convenience, let the index of the matrix $A_\alpha$ start from $0$. Since the diagonal of $A_\alpha$ is zero, it follows that 
\[
	A_{\alpha}^{2n}(0,0) = \sum_{ \{ i_0, i_1, \dots, i_{2n} \} \in \mathfrak D_{2n}} \prod_{j = 0}^{2n-1} A_\alpha(i_j, i_{j+1}) ,
\]
where $\mathfrak D_{2n}$ denotes the set of indices  $\{i_0, i_1, \dots, i_{2n}\}$ satisfying that 
\begin{align*}
	&i_0 = 0, i_{2n} = 0, i_j \ge 0,\\
	&|i_{j+1} - i_j| = 1, j=0,1,\dots,2n-1.
\end{align*}
Each element in $\mathfrak D_{2n}$ corresponds to a path of length $2n$ consisting of rise steps or rises and fall steps or falls which starts at $(0,0)$ and ends at $(2n,0)$, and stays above the $x$-axis, called a Dyck path. We also use $\mathfrak D_{2n}$ to denote the set of all Dyck paths of length $2n$.

A Dyck path $p$ is assigned a weight $w(p)$ as follows. We assign a weight $(\alpha + k+1)$ for each rise step  from level $k$ to $k + 1$, and the weight $w(p)$ is the product of all those weights. Then 
\[
	u_n(\alpha) = A_{\alpha}^{2n}(0,0) = \sum_{p \in \mathfrak D_{2n}} w(p).
\]
\begin{figure}[ht]
    \centering
    \includegraphics[width=0.8\textwidth]{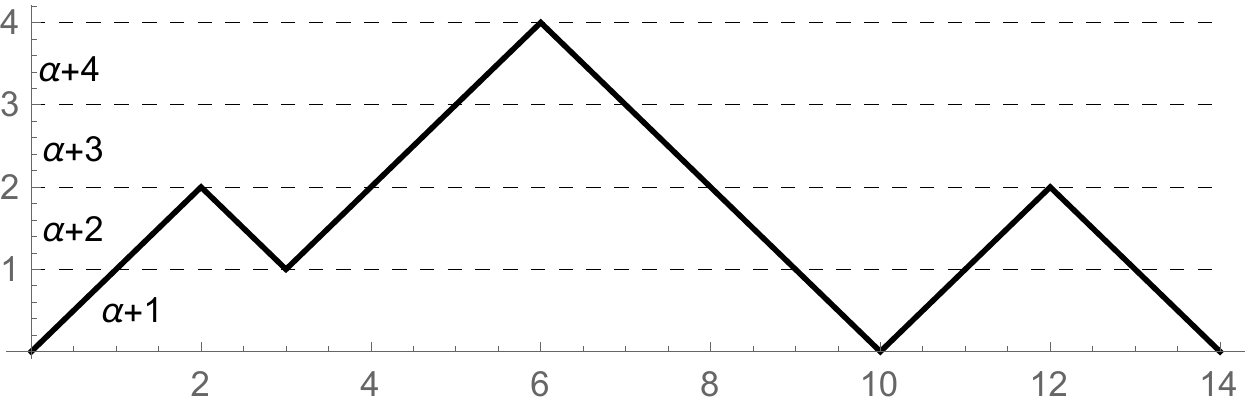}
    \caption{A Dyck path $p$ with weight $w(p) = (\alpha +1)^2(\alpha+2)^3(\alpha+3)(\alpha+4)$.}
\end{figure}

Let $\mathfrak D_{2n}^*$ be the set of all Dyck paths of length $2n$ which do not meet the $x$-axis except the starting and the ending points. Let 
\[
	v_n(\alpha) = \sum_{p \in \mathfrak D^*_{2n} }w(p).
\] 
Since each Dyck path  $p=(i_0,i_1,\dots,i_{2n-1}, i_{2n})\in\mathfrak D_{2n}^*$ is one-to-one correspondence with a Dyck path $q=(i_1-1, i_2 - 1, \dots, i_{2n-1}-1)$ of length $2(n-1)$, it follows that
\[
	v_n(\alpha) = (\alpha + 1)u_{n - 1}(\alpha + 1).
\] 
Moreover, let $2i$ be the first time that the Dyck path $p$ meets the $x$-axis. Then either $i = n$ or the Dyck path $p$ is the concatenation of a Dyck path  in $\mathfrak D_{2i}^*, (1 \le i < n),$ and another Dyck path of length $2(n - i)$. Thus,
\begin{align*}
	u_n(\alpha) &= v_n(\alpha) + \sum_{i = 1}^{n-1} v_i(\alpha) u_{n - i}(\alpha)\\
	&=(\alpha + 1) u_{n - 1}(\alpha + 1) + \sum_{i = 1}^{n - 1} (\alpha + 1) u_{i-1}(\alpha + 1) u_{n - i} (\alpha) \\
	&= (\alpha + 1) \sum_{i = 0}^{n - 1} u_i(\alpha + 1) u_{n - 1 - i}(\alpha).
\end{align*}
The proof of (i) is complete. We will prove the second statement after the  next lemma.
\end{proof}

\begin{lem}\label{lem:relation-2step}
	Let $\alpha \ge 0 $ be fixed. Let $\{a_n\}$ be a sequence defined recursively by
	\begin{equation}\label{eqdefine-a}
		\begin{cases}
		a_n = (2n - 1) a_{n - 1} + \alpha \sum_{i = 0}^{n - 1} a_i a_{n-1-i}, \quad  n \ge 1,\\
		a_0 = 1.
		\end{cases}
	\end{equation}
	Let $\{b_n\}$ be a sequence defined by the following relations $b_0 = 1,$
	\begin{equation}\label{eqdefine-ab}
		a_n = (\alpha + 1) \sum_{i = 0}^{n - 1} b_i a_{n - 1 - i}, \quad n\ge 1.
	\end{equation}
	Then $\{b_n\}$ satisfies an analogous recursive relation as $\{a_n\}$,
	\begin{equation}\label{eqdefine-b}
		\begin{cases}
		b_n = (2n - 1)b_{n - 1} + (\alpha + 1) \sum_{i = 0}^{n - 1} b_i b_{n-1-i},\quad n \ge 1,\\
		b_0=1.
		\end{cases}
	\end{equation}
\end{lem}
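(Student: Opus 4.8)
The plan is to pass to generating functions, translating each of the three recurrences into a differential identity for a formal power series, and then to show that the first two identities force the third. Write $A(x) = \sum_{n \ge 0} a_n x^n$ and $B(x) = \sum_{n \ge 0} b_n x^n$. Extracting the coefficient of $x^n$ shows that \eqref{eqdefine-a} is equivalent to
\[
	A - 1 = 2x^2 A' + x A + \alpha x A^2,
\]
that \eqref{eqdefine-ab} is equivalent to
\[
	A - 1 = (\alpha + 1) x A B,
\]
and that the target recurrence \eqref{eqdefine-b} is equivalent to
\[
	B - 1 = 2x^2 B' + x B + (\alpha + 1) x B^2 .
\]
For the first and third one notes that the factor $(2n-1)$ generates the first-order operator $2x^2(\cdot)' + x(\cdot)$ and the convolution generates the quadratic term, while the second is simply a Cauchy product. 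These are identities of formal power series, and the second determines $B$ uniquely from $A$ (the coefficient of the top term $b_{n-1}$ in \eqref{eqdefine-ab} is $(\alpha+1)a_0 = \alpha + 1 \ne 0$), so $\{b_n\}$ is well defined. Thus it suffices to deduce the third display from the first two.

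The key device is to linearize the quadratic equation for $A$ through the substitution $G = 1/A$, which is a legitimate power series since $a_0 = 1$. Because $A' = -G'/G^2$, multiplying the first display by $G^2$ converts it into the Riccati-type identity
\[
	2x^2 G' = G^2 + (x - 1) G + \alpha x .
\]
Dividing the second display by $A$ turns it into $G = 1 - (\alpha + 1) x B$, so that $G' = -(\alpha + 1)(B + x B')$. I would then substitute these expressions for $G$ and $G'$ into the Riccati identity and expand. After collecting terms one checks that every term is divisible by $(\alpha+1)\,x$; dividing through by $-(\alpha+1)\,x$ leaves precisely $B - 1 = 2x^2 B' + x B + (\alpha + 1) x B^2$, and reading off the coefficient of $x^n$, together with $b_0 = 1$ which is given, recovers \eqref{eqdefine-b}.

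The only genuine work is the expansion and simplification in this substitution step, and it is here that the parameter shift becomes transparent. The quadratic term of $B$'s equation originates entirely from $G^2 = (1-(\alpha+1)xB)^2$, which contributes $(\alpha+1)^2 x^2 B^2$; since the whole identity carries an overall factor $(\alpha+1)$, inherited from the coefficient $(\alpha+1)$ in the defining relation \eqref{eqdefine-ab}, this $(\alpha+1)^2$ is reduced to $(\alpha+1)$ after cancellation, which is exactly the quadratic coefficient in \eqref{eqdefine-b}. The remaining pieces, namely the $+\alpha x$ of the Riccati equation, the $+x$ and $-1$ coming from $(x-1)G$, and the cross term $-2(\alpha+1)xB$ from $G^2$, combine to reproduce the constant and the $xB$ term. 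The main hazard is purely bookkeeping: one must track several like terms of the forms $xB$, $x^2 B$, $x^2 B^2$ and $x^3 B'$, and verify that each resulting term is divisible by $(\alpha+1)\,x$ so that the final division is a valid identity of formal power series.
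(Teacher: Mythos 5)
Your proof is correct and takes essentially the same route as the paper: both translate the three recurrences into the same formal power/Laurent series identities ($f-1 = 2X^2f' + Xf + \alpha Xf^2$, etc.) and deduce the third identity from the first two via the relation $g = (f-1)/((\alpha+1)Xf)$, which is exactly your $G = 1 - (\alpha+1)xB$ with $G = 1/A$. Your reciprocal substitution, turning the equation for $A$ into the Riccati identity $2x^2G' = G^2 + (x-1)G + \alpha x$ before substituting the affine expression for $G$, is simply a tidy way of carrying out the computation the paper compresses into ``we can easily check,'' and the algebra (including the cancellation of $(\alpha+1)^2$ to $(\alpha+1)$ after dividing by $(\alpha+1)x$) checks out.
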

\begin{proof}
	Consider the field of formal Laurent series over $\R$, denoted by $\R((X))$, 
	\[
		\R((X)) = \left\{f(X) = \sum_{n \in \Z} c_n X^n : c_n \in \R, c_n = 0 \text{ for } n < n_0\right\}.
	\]
	The addition is defined as usual and the multiplication is well defined as 
	\[
		f(X) g(X) = \sum_{n \in \Z} \left( \sum_{i \in \Z} c_i d_{n - i} \right)X^n,
	\]
	for $f(X) = \sum c_n X^n, g(X) = \sum d_n X^n \in \R((X))$. The quotient $f(X) / g(X)$ is understood as $f(X)g(X)^{-1}$ for $g(X) \neq 0$. The formal derivative is also defined as 
	\[
		f'(X) = \sum_{n \in \Z} c_n n X^{n - 1} \in \R((X)).
	\]
	
	Now let 
	\[
		f(X) = \sum_{n = 0}^\infty a_n X^n,\quad  g(X) = \sum_{n = 0}^\infty b_n X^n.
	\]
	It is straightforward to show that the recursive relation \eqref{eqdefine-a} is equivalent to the following equation  
	\[
		f(X) - 1 = 2 X^2 f'(X) + Xf(X) + \alpha  X f^2(X).
	\]
	In addition, the relation \eqref{eqdefine-ab} leads to
	\[
		g(X) = \frac{f(X) - 1}{(\alpha + 1) X f(X)}.
	\]
	Finally, we can easily check that $g(X)$ satisfies 
	\[
		g(X) - 1 = 2X^2 g'(X) + X g(X) + (\alpha + 1) X g^2(X),
	\]
	which is equivalent to the recursive relation \eqref{eqdefine-b}.
	The proof is complete.
\end{proof}

\begin{proof}[Proof of Proposition~\rm{\ref{lem:self-recurrence-relation}(ii)}]
When $\alpha = 0$, it is well known that $u_n(0)$ is the $2n$th moment of the standard Gaussian distribution, and is given by 
\[
	u_n(0) = (2n -1) !!.
\]
Consequently, the conditions in Lemma~\ref{lem:relation-2step} are satisfied for $a_n = u_n(0), b_n= u_n(1)$ and $\alpha = 0$. It follows that the recursive relation \eqref{recursive-relation-un} then holds for $\alpha = 1$. Continue this way, it follows that the recursive relation \eqref{recursive-relation-un} holds for any $\alpha \in \N$. We conclude that it holds for all $\alpha$ because of the fact that $\{u_n(\alpha)\}$ is a polynomial of degree $n$ in $\alpha$. The proof is complete.
\end{proof}

\subsection{Explicit formula for the spectral measure of $A_\alpha$, proof of Theorem~\ref{thm:explicit-formula-for-mu-bar}(ii)}

In this section, by using the method of Martin and Kearney \cite{Martin2010}, we derive the explicit formula for the mean spectral measure $\bar \mu_\alpha$ from the relation \eqref{recursive-relation-un}, 
\[
	\begin{cases}
	u_n(\alpha) = (2n -1)u_{n-1}(\alpha) + \alpha \sum_{i = 0}^{n - 1} u_i(\alpha) u_{n - 1 - i}(\alpha), \quad n \ge 1,\\
	u_0(\alpha) = 1.
	\end{cases}
\]
Recall that $u_n(\alpha) = \langle \bar \mu_\alpha, x^{2n}\rangle$ and $\bar\mu_\alpha$ is a symmetric probability measure.

Let us extract here the main result of \cite{Martin2010}. The problem is to find a function $\nu$ for which 
\[
	\int_0^\infty x^{n-1} \nu(x) dx = u_n, \quad n=1,2,\dots,
\]
where the sequence $\{u_n\}$ is given by a general self-convolutive recurrence 
\begin{equation}\label{Martin}
	\begin{cases}
	u_n = (\alpha_1 n + \alpha_2) u_{n - 1} + \alpha_3 \sum_{i = 1}^{n - 1}u_i u_{n - i}, \quad n\ge 2, \\
	u_1 = 1,
	\end{cases}
\end{equation}
$\alpha_1, \alpha_2$ and $\alpha_3$ being constants. Then the solution is given by (Eq.~(13)--Eq.~(16) in \cite{Martin2010}),
\[
	\nu(x) = \frac{k(kx)^{-b} e^{-kx}}{\Gamma(a+1) \Gamma(a-b+1)} \frac{1}{U_R(kx)^2 + U_I(kx)^2},
\]
where, 
\begin{align*}
	U_R (x) &= e^{-x} \bigg( \frac{\Gamma(1 - b)}{\Gamma(a - b + 1)} {}_1F_1 (b - a;b;x)   \\
		&\hspace{3cm}- (\cos \pi b)\frac{\Gamma(b - 1)}{\Gamma(a)}x^{1 - b} {}_1F_1 (1 - a;2 - b;x) \bigg), \\
	U_I(x) &= (\sin \pi b) e^{-x}  \frac{\Gamma(b - 1)}{\Gamma(a)} x^{1 - b} {}_1F_1 (1 - a;2 - b;x),
\end{align*}
and $k = 1/\alpha_1, a= \alpha_3/\alpha_1, b = -1- \alpha_2/\alpha_1$, provided $\alpha_1 \ne 0$.
Here ${}_1 F_1(a;b;z)$ is the Kummer function.

The sequence $\{u_n(\alpha)\}_{n \ge 0}$ is a particular case of the self-convolutive recurrence~\eqref{Martin} with parameters $\alpha_1 = 2, \alpha_2 = -3$ and $\alpha_3 = \alpha$. Note that our sequence $\{u_n(\alpha)\}$ starts from $n = 0$, and thus $\alpha_2 = -3$. By direct calculation, we get $k = 1/2, a = \alpha/2,$ and  $b = 1/2$. Therefore, the function $\nu_\alpha(x)$ for which $u_n(\alpha) = \int_0^\infty x^n d\nu_\alpha(x) dx, n=0,1,\dots,$ is given by 
\[
	\nu_{\alpha}(x) = \frac{1}{\sqrt{2}\Gamma(\frac{\alpha}{2} + 1)\Gamma(\frac{\alpha}{2} + \frac12) } \frac{1}{\sqrt{x}} e^{-\frac x 2}  \frac{1}{U_R(x/2)^2 + U_I(x/2)^2}, \quad x>0,
\]
where
\begin{align}
	U_R (x) &= e^{-x}  \frac{\Gamma(\frac 12)}{\Gamma(\frac{\alpha}{2}+ \frac{1}{2})} {}_1F_1 (\frac{1}{2} - \frac{\alpha}{2};\frac{1}{2};x)  , \label{UR}\\
	U_I(x) &=  e^{-x}  \frac{\Gamma(-\frac{1}{2})}{\Gamma(\frac{\alpha}{2})} x^{1/2} {}_1F_1 (1 - \frac{\alpha}{2};\frac{3}{2};x). \label{UI}
\end{align}
It is clear that $\nu_\alpha(x) > 0$ for any $x>0$. Now it is easy to check that the function  $\bar \mu_\alpha (y)$ defined by
\[
	\bar \mu_\alpha (y) = |y| \nu_\alpha(y^2), \quad y \in \R,
\]
satisfies the following relations  
\[
	\int_{\R} y^{2n + 1}  \bar \mu_\alpha (y)dy = 0, \quad \int_{\R} y^{2n}\bar \mu_\alpha (y) dy = u_n(\alpha), \quad n = 0,1,\dots, 
\]
In other words, $\bar \mu_\alpha (y)$ is the density of the mean spectral measure $\bar \mu_\alpha$ with respect to the Lebesgue measure.


We are now in a position to simplify the explicit formula of $\bar \mu_\alpha$. 
Let 
\begin{align*}
	V_R(y) &=  \left( \frac{\Gamma(\frac{\alpha}{2} + 1)\Gamma(\frac{\alpha}{2} + \frac12)}{\Gamma(\frac 12)} \right)^{1/2} U_R(y^2/2),\\
	&=  2^{-\frac\alpha 2}\Gamma(\alpha + 1)^{\frac 12}  \frac{\Gamma(\frac 12)}{\Gamma(\frac{\alpha}{2}+ \frac{1}{2})}e^{-\frac{y^2}{2}}  {}_1F_1 (\frac{1}{2} - \frac{\alpha}{2};\frac{1}{2};\frac{y^2}{2}),\\
	V_I(y) &=- \left( \frac{\Gamma(\frac{\alpha}{2} + 1)\Gamma(\frac{\alpha}{2} + \frac12)}{\Gamma(\frac 12)} \right)^{1/2} U_I(y^2/2)\\
	&=-2^{-\frac\alpha 2 -\frac12}\Gamma(\alpha + 1)^{\frac 12}  \frac{\Gamma(-\frac{1}{2})}{\Gamma(\frac{\alpha}{2})} {y}e^{-\frac{y^2}{2}} {}_1F_1 (1 - \frac{\alpha}{2};\frac{3}{2};\frac{y^2}{2}).
\end{align*}
Here, in the above expressions, we have used the following relation for Gamma function
\begin{equation}\label{relation-Gamma-function}
	\frac{\Gamma (\frac\alpha2 + \frac12)\Gamma (\frac\alpha2 + 1)}{\Gamma(\frac12)} = 2^{-\alpha}\Gamma(\alpha + 1).
\end{equation}
Then $\bar \mu_\alpha (y)$ can be written as 
\[
	\bar \mu_\alpha (y) = \frac{e^{-\frac{y^2}2} }{\sqrt{2\pi}} \frac{1}{V_R(y)^2 + V_I(y)^2}.
\]

Next, we will show that $V_R(y)$ and $V_I(y)$ are the Fourier cosine transform and Fourier sine transform of 
\[
	f_\alpha(t) = \pi \sqrt{\frac{\alpha}{\Gamma(\alpha)}} t^{\alpha - 1} \frac{e^{-\frac {t^2}{2}}}{\sqrt{2\pi}},
\]
respectively.
Let us now give definitions of Fourier transforms. The Fourier transform of a function $f\colon \R \to \C$ is defined to be
\[
	\F(f)(y) = \frac 1{\sqrt{2\pi}} \int_{-\infty}^\infty f(t) e^{i y t} dt,\quad (y \in \R),
\]
and the Fourier cosine transform, the Fourier sine transform are defined to be 
\begin{align*}
	\F_c(f)(y) = \sqrt{\frac 2\pi} \int_0^\infty f(t) \cos(yt) dt, \quad (y > 0),\\
	\F_s(f)(y) = \sqrt{\frac 2\pi} \int_0^\infty f(t) \sin(yt) dt,\quad (y > 0),
\end{align*} 
respectively. Then those transforms are related as follows
\[
	\begin{cases}
		\F(f)(y) = \F_c(f)(y),\quad ( y \ge 0), &\text{if $f(t)$ is even,}\\
		\F(f)(y) = i\F_s(f) (y),\quad ( y\ge 0), &\text{if $f(t)$ is odd.}
	\end{cases}
\]

For $\alpha > 0$, we have (cf.~Formula 3.952(8) in \cite{Gradshteyn-Ryzhik2007})
\begin{equation*}
	\F_c(t^{\alpha - 1} e^{-\frac{t^2}{2}}) = \frac{2^{\frac\alpha2 - \frac 12} \Gamma(\frac\alpha 2)}{\sqrt{\pi}} e^{-\frac{y^2}{2}} {}_1F_1(\frac 12 - \frac{\alpha}{2}; \frac 12; \frac {y^2}{2}).
\end{equation*}
Then by some simple calculations, we arrive at the following relation
\[
	V_R(y) = \F_c(f_\alpha(t))(y), \quad y\ge 0.
\]
Similarly, 
\[
	V_I(y) =  \F_s(f_\alpha(t))(y), \quad y\ge 0,
\]
by using Formula~3.952(7) in \cite{Gradshteyn-Ryzhik2007}, 
\begin{equation*}
	\F_s(t^{\alpha - 1} e^{-\frac{t^2}{2}}) = \frac{2^{\frac\alpha2} \Gamma(\frac\alpha 2+\frac12)}{\sqrt{\pi}} y e^{-\frac{y^2}{2}} {}_1F_1(1  - \frac{\alpha}{2}; \frac 32; \frac{y^2}{2}).
\end{equation*}

	By definitions, $V_R(y)$ is an even function and $V_I(y)$ is an odd function. Thus the following expression holds for all $y \in \R$,
\begin{align*}
	V_R(y) + i V_I(y) &= \sqrt{\frac 2\pi} \int_0^\infty f_\alpha(t) (\cos(yt) + i\sin(yt) dt \\
	&= \sqrt{\frac 2\pi} \int_0^\infty f_\alpha(t) e^{i yt}dt =: \hat f_\alpha(y).
\end{align*}
Consequently, 
\[
	V_R(y)^2 + V_I(y)^2 = |\hat f_\alpha(y)|^2,
\]
which completes the proof of Theorem~\ref{thm:explicit-formula-for-mu-bar}(ii).



We plot the graph of the density $\bar \mu_\alpha(y)$ for several values $\alpha$ as in the following figure by using Mathematica. It follows from the Jacobi matrix form that the spectral measure of $\frac{1}{\sqrt{\alpha}}A_\alpha $ converges weakly to the semicircle law as $\alpha$ tends to infinity. Note that the semicircle law, the probability measure supported on $[-2,2]$ with the density
\[
	\frac{1}{2\pi} \sqrt{4 - x^2}, (-2 \le x \le 2),
\]
is the spectral measure of the following Jacobi matrix 
\[
	\begin{pmatrix}
		0		&1	\\
		1		&0		&1		\\
				&1		&0		&1\\
									&&\ddots		&\ddots		&\ddots
	\end{pmatrix}.
\]
\begin{figure}[ht]
    \centering
    \includegraphics[width=0.8\textwidth]{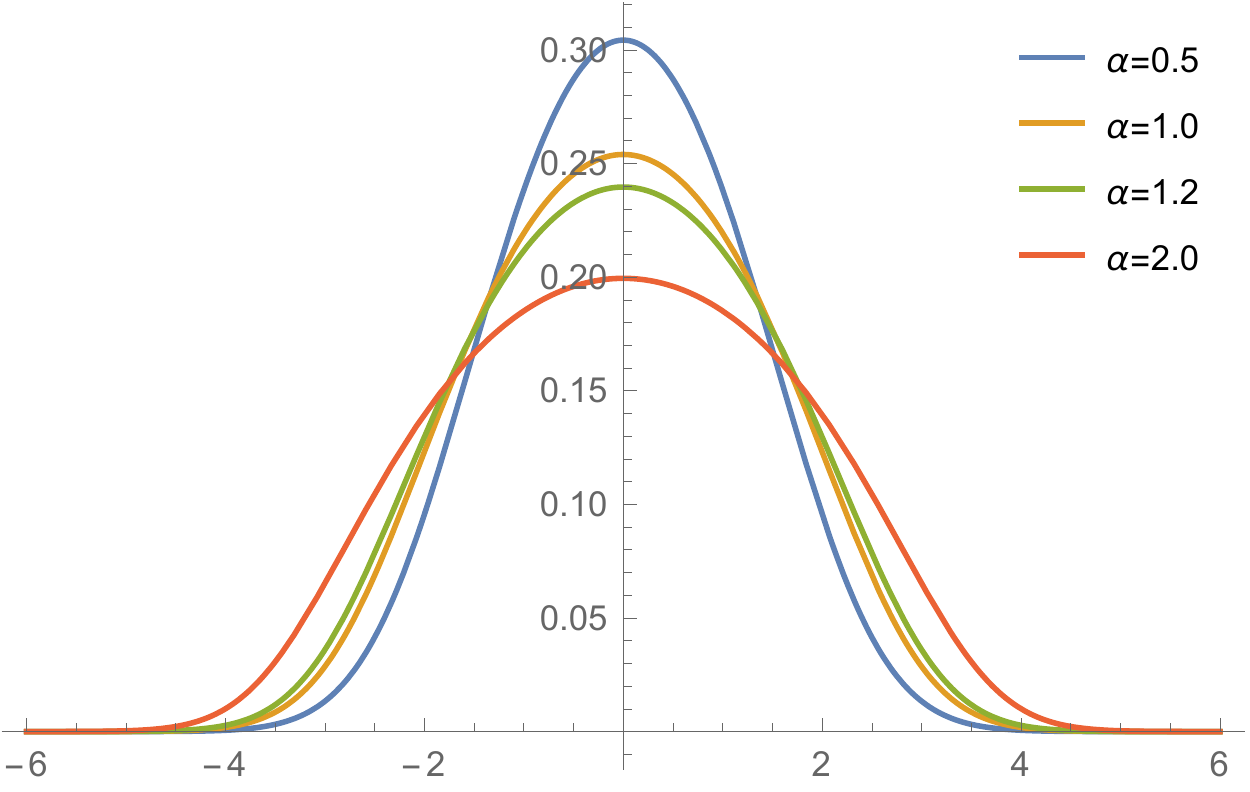}
    \caption{The density $\bar\mu_\alpha(y)$ for several values $\alpha$.}
    \label{fig:density}
\end{figure}

\begin{remark}
When $\alpha$ in a positive integer number, we can give even more explicit expressions for $V_R(y)$ and $V_I(y)$.
\begin{itemize}
\item[(i)] $ \alpha = 2n, n \in \N$. In this case, $f_\alpha(t)$ is an odd function. Therefore 
\[
	V_I(y) = \F_s(f_\alpha(t)) = -i \F(f_\alpha(t)).
\] 
Note that 
\[
	\F(e^{-\frac{t^2}{2}}) = e^{-\frac{y^2}{2}}.
\]
Therefore, for integer $\alpha \ge 1$,
\[
	\F(t^{\alpha - 1} e^{-\frac{t^2}{2}}) = (i)^{\alpha - 1} \frac{d^{\alpha -1}}{dy^{\alpha - 1}}(e^{-\frac{y^2}{2}}).
\]
Consequently,
\begin{align*}
	V_I(y)  &= -i^{\alpha} \pi \sqrt{\frac{\alpha}{\Gamma(\alpha)}}\frac{d^{\alpha -1}}{dy^{\alpha - 1}}(e^{-\frac{y^2}{2}})  \frac{1}{\sqrt{2\pi}} \\
	&= -i^{\alpha} \pi \sqrt{\frac{\alpha}{\Gamma(\alpha)}} e^{\frac{y^2}{2}}\frac{d^{\alpha -1}}{dy^{\alpha - 1}}(e^{-\frac{y^2}{2}})  \frac{e^{-\frac{y^2}{2}}}{\sqrt{2\pi}} \\
	&=- i^{\alpha} \pi \sqrt{\frac{\alpha}{\Gamma(\alpha)}} He_{\alpha-1} \frac{e^{-\frac{y^2}{2}}}{\sqrt{2\pi}}.
\end{align*}
Here $He_m$ denotes probabilists' Hermite polynomials.

\item[(ii)] $\alpha = 2n + 1$. This case is very similar. Since $f_\alpha(t)$ is an even function, it follows that 
\[
	V_R(y) = \F_c(f_\alpha(t)) (y) = \F(f_\alpha(t)) = i^{\alpha - 1} \pi \sqrt{\frac{\alpha}{\Gamma(\alpha)}} He_{\alpha-1} \frac{e^{-\frac{y^2}{2}}}{\sqrt{2\pi}}.
\]

\end{itemize}
\end{remark}

{}

\hfill
\begin{tabular}{l}
Trinh Khanh Duy \\
Institute of Mathematics for Industry \\
Kyushu University\\
Fukuoka 819-0395, Japan \\
e-mail: trinh@imi.kyushu-u.ac.jp \\
\\

Tomoyuki Shirai \\
Institute of Mathematics for Industry \\
Kyushu University\\
Fukuoka 819-0395, Japan \\
e-mail: shirai@imi.kyushu-u.ac.jp \\

\end{tabular}


\begin{thebibliography}{}
\bibitem{DE02} I.~Dumitriu and A.~Edelman:
\emph{ Matrix models for beta ensembles,}
 J. Math. Phys. {\bf 43} (2002), no. 11, 5830--5847.


\bibitem{DE06} I.~Dumitriu and A.~Edelman:
\emph{ Global spectrum fluctuations for the $\beta$-Hermite and $\beta$-Laguerre ensembles via matrix models,}
J. Math. Phys. {\bf 47} (2006), no. 6, 063302, 36pp.

\bibitem{Gradshteyn-Ryzhik2007}
I.S.~Gradshteyn and I.M.~Ryzhik: {Table of integrals, series, and products.} Translated from the Russian. Translation edited and with a preface by Alan Jeffrey and Daniel Zwillinger. With one CD-ROM (Windows, Macintosh and UNIX). Seventh edition. Elsevier/Academic Press, Amsterdam, 2007.

\bibitem{Martin2010}R.J.~Martin and M.J.~Kearney:
\emph{An exactly solvable self-convolutive recurrence,}
Aequationes Math. {\bf 80} (2010), no. 3, 291--318. 


\bibitem{OEIS} OEIS, the On-line Encyclopedia of Integer Sequences. https://oeis.org/


\bibitem{Simon2011}
B.~Simon:
Szeg\" o's theorem and its descendants. Spectral theory for $L^2$ perturbations of orthogonal polynomials. M. B. Porter Lectures. Princeton University Press, Princeton, NJ, 2011.

\bibitem{Forrester2013} N.S.~Witte and P.J.~Forrester:
\emph{ Moments of the Gaussian $\beta$ ensembles and the large-$N$ expansion of the densities,} J. Math. Phys. {\bf 55} (2014), 083302 .


\end{thebibliography}
\end{document}